\newtheorem{theorem}{\bf Theorem}[section]
\newtheorem{lemma}[theorem]{\bf Lemma}
\newtheorem{cor}[theorem]{\bf Corollary}
\newtheorem{prop}[theorem]{\bf Proposition}
\theoremstyle{definition}
\newcommand{\Z}{\mathbb{Z}}
\newcommand{\dd}{\frac{p-1}{2}}
\newcommand{\ee}{\frac{p+1}{2}}
\newcommand{\F}{\mathbb{F}}
\title{Degree gap of polynomials of small range sum}
\author{ Ádám Markó \thanks{E\"otv\"os Lor\'and University, Institute of Mathematics, Budapest, Hungary
E-mail: {\tt marqadam@gmail.com}} \\ G\'abor Somlai \thanks{E\"otv\"os Lor\'and University, Institute of Mathematics, Budapest, Hungary
E-mail: {\tt gabor.somlai@ttk.elte.hu}\\Research supported by Bolyai scholarship, and OTKA 138596\\
Research was carried out as part of the thematic semester Fourier Analysis and Additive Problems at Erd\H{o}s Center}} 
\date{}
\begin{document}
\maketitle
\begin{abstract}
Polynomials of degree $\frac{p-1}{2}$ of range sum $p$ were determined by the second authors relying on a joint work of the authors by Kiss and Nagy. We prove that for large enough primes there is no polynomial of degree $\frac{p+1}{2}$ of range sum $p$.
\end{abstract}

\section{Introduction}
We consider polynomials and polynomial functions over the field of degree $p$, where $p$ is a prime. It is well known that there is a natural bijection between polynomials of degree at most $p-1$ and functions from $\F_p$ to $\F_p$. On the other hand, $\F_p$ can be identified with the following subset $\{0,1, \ldots, p-1\}$ of the integers so these functions can also be considered  as functions from $\F_p$ to $\Z$.

Thus we obtain an  identification of the elements of $\F_p[x]$ of degree at most $p-1$ with functions from $\F_p$ to $\{0,1,\ldots, p-1\}$. This allows us to sum the elements of the range of a polynomial from  $\F_p[x]$, and obtain a nonnegative integer. 
It was proved in \cite{som} that if $f \in \F_p[x]$ is a polynomial of range sum $p$, then either $f=1$ or $\deg{f} \ge \frac{p-1}{2}$. The second author conjectured that up to affine transformations, the only polynomial with the preceding properties is $x^{\frac{p-1}{2}}+1$. The conjecture is clearly false, which is shown by the polynomial $\frac{p+1}{2}(x^{\frac{p-1}{2}}+1)$. It would have been more plausible to conjecture that there are no further polynomials since both of these two types appear as projection polynomial of the set defined by Lovász and Schrijver. This version of the conjecture was later confirmed.

The lower bound for the degree of polynomials whose range sum is equal to $p$, combined with a lemma from \cite{kissomlai} provide a new proof for Rédei's result on the number of directions determined by a set of $p$ points, which was an important result in finite geometry. The original proof of it is the first instance 
of polynomial methods used for discrete geometry problems.
It might be important to note that there were several other proof of Rédei's theorem, both of combinatorial origin \cite{dressklinmuzchuk} and one using Fourier techniques \cite{lev}. 

Lovász and Schrijver proved that there is a unique set of size $p$ in $\F_p^2$ determining exactly $\frac{p+3}{2}$ directions.
A further investigation of the connection between the range sum and the degree of a polynomial was proved in \cite{negyen}, where the authors verified that the only possible projection polynomials of degree exactly $\frac{p-1}{2}$ of a set of size $p$ all are derived from the Legendre symbol if $p$ is large enough. 
This result gave an alternative proof for the result of Lovász and Schrijver but only for the same set of primes, which are larger than $7.48*10^6$. This was further developed by the first author \cite{adam} to primes larger than 23\footnote{For small primes the same statement was verified both by the first author and Marcell Alexy. }.

Another important result in this field was proved by G\'acs \cite{gacscombinatorica}, who established another gap between $\frac{p+5}{2}$ and $\lfloor 2\frac{p-1}{2} \rfloor$ for the possible number of directions determined by a set of size $p$. We intend to find a similar gap for the degree of polynomials of range sum $p$, but it seems very difficult to achieve such a result so we prove the following. 

\begin{theorem}\label{thm1}
If $p$ is larger than 32, then there is no polynomial $f$ of degree $\frac{p+1}{2}$ such that $\sum_{x\in \F_p}f(x)=p$. 
\end{theorem}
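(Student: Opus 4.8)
Suppose, for contradiction, that $f\in\F_p[x]$ has $\deg f=\frac{p+1}{2}$ and $\sum_{x\in\F_p}f(x)=p$, where $f(x)$ is read as the integer in $\{0,1,\dots,p-1\}$ representing the value. For $0\le j\le p-1$ put $n_j=|f^{-1}(j)|$. From $\sum_j n_j=p$ and $\sum_j jn_j=p$ we get $\sum_{j\ge 2}(j-1)n_j=n_0$; since $f\not\equiv 0$ has at most $\deg f=\frac{p+1}{2}$ roots, $n_0\le\frac{p+1}{2}$, hence $|\Supp f|=p-n_0\ge\frac{p-1}{2}$, and if $M=\max_x f(x)$ then $M\le n_0+1\le\frac{p+3}{2}$ (and $M\ge 2$, as the only constant of range sum $p$ is $f\equiv 1$). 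I plan to first squeeze $n_0$ into a tiny window, then rule out the few resulting shapes.

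\textbf{Reformulating the degree via weighted power sums.}
For $g\in\F_p[x]$ of degree $\le p-1$ one has $[x^i]g=-\sum_{x\in\F_p}x^{\,p-1-i}g(x)$ for $1\le i\le p-1$. Applying this to $f$ (already reduced, since $\frac{p+1}{2}<p$), the hypothesis $\deg f\le\frac{p+1}{2}$ is \emph{equivalent} to
\[
\Pi_j:=\sum_{x\in\F_p}x^{j}f(x)\equiv 0\pmod p\qquad\text{for }0\le j\le\tfrac{p-5}{2},
\]
while ``$\deg f=\frac{p+1}{2}$ exactly'' adds the single extra condition $\Pi_{(p-3)/2}\not\equiv 0$. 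Writing $f=\sum_{k\ge 1}\mathbf{1}_{A_k}$ with $A_k=\{x:f(x)\ge k\}$ (so $A_1\supseteq A_2\supseteq\cdots$), one has $\Pi_j=\sum_{k}\pi_j(A_k)$, where $\pi_j(A)=\sum_{a\in A}a^{j}$.

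\textbf{The main step: $n_0$ is forced to be large.}
Let $Z=f^{-1}(0)$, so $A_1=\F_p\setminus Z$ and $|Z|=n_0$. Using $\pi_j(\F_p)=0$ for $0\le j\le p-2$ together with $\Pi_j=0$, the multiset $\mathcal M:=\biguplus_{k\ge 2}A_k$ satisfies $|\mathcal M|=\sum_{l\ge 2}(l-1)n_l=n_0=|Z|$ and $\pi_j(\mathcal M)=\pi_j(Z)$ for $0\le j\le\frac{p-5}{2}$. If $n_0\le\frac{p-5}{2}$, then $\mathcal M$ and $Z$ have equal power sums $\pi_1,\dots,\pi_{n_0}$, so by Newton's identities (valid since $p>n_0$) their generating polynomials coincide, i.e.\ $\mathcal M=Z$ as multisets. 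But the multiplicity of $x$ in $\mathcal M$ equals $\max(f(x)-1,0)$, so $\mathcal M$ being multiplicity-free forces $M\le 2$, and then $\mathcal M=A_2=f^{-1}(2)$, whence $f^{-1}(2)=Z=f^{-1}(0)$ --- impossible as these are disjoint and nonempty. Therefore $n_0\ge\frac{p-3}{2}$, so
\[
n_0\in\Bigl\{\tfrac{p-3}{2},\ \tfrac{p-1}{2},\ \tfrac{p+1}{2}\Bigr\},\qquad |\Supp f|=p-n_0\le\tfrac{p+3}{2}.
\]

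\textbf{Endgame and the main obstacle.}
In each of the three cases $f$ has about $\frac{p-1}{2}$ roots while $\deg f=\frac{p+1}{2}$, which pins down its factorization up to a short list: e.g.\ for $n_0=\frac{p-1}{2}$ necessarily $f=c\,(x-z_0)\prod_{z\in Z}(x-z)$ for some $z_0\in Z$, and finitely many analogous shapes occur for $n_0\in\{\frac{p-3}{2},\frac{p+1}{2}\}$; moreover $\Supp f$ is tiny, so the values of $f$ on $\Supp f$ (each in $\{1,\dots,M\}$ with $M\le n_0+1$ and $\sum_{\Supp f}(f(x)-1)=n_0$) are highly constrained, and the extra requirement $\Pi_{(p-3)/2}\not\equiv 0$ must still be met. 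I expect to finish by over-determining $f$ in each shape: either by feeding an auxiliary polynomial built from $f$ (peeling off a linear factor, or using $\overline{(f-1)(f-2)}$, which is divisible by $\prod_{w\notin Z}(x-w)$ and has controlled degree) into the known classification of range-sum-$p$ polynomials of degree $\le\frac{p-1}{2}$ (cf.\ \cite{som}), or equivalently via a R\'edei-type lacunary-polynomial / directions argument; the bound $p>32$ enters precisely here, together with a direct check of a handful of residual small configurations. The crux --- and the reason $\frac{p+1}{2}$ behaves differently from all smaller degrees --- is exactly this endgame: for $\deg f\le\frac{p-1}{2}$ one has $\sum_x\binom{f(x)}{2}\equiv 0\pmod p$ and the argument of \cite{som} closes immediately, whereas at $\frac{p+1}{2}$ that vanishing fails, leaving a genuine family of structured near-counterexamples (the large-$n_0$ shapes above) that has to be eliminated by hand.
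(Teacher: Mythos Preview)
Your ``main step'' is correct and cleanly argued: the Newton's-identities trick shows $n_0\ge\frac{p-3}{2}$. But notice that this is exactly the lemma the paper quotes from \cite{negyen} as its \emph{starting point}; it is not part of the new content of Theorem~\ref{thm1}. Everything that actually proves the theorem happens in your ``endgame'', and there your proposal stops being a proof and becomes a wish list. You write ``I expect to finish by over-determining $f$ in each shape'' and gesture at peeling off a linear factor, at $\overline{(f-1)(f-2)}$, and at a R\'edei-type argument, but none of these is carried out, and none of them obviously works. Peeling a linear factor gives a degree-$\frac{p-1}{2}$ polynomial with no control on its range sum; $(f-1)(f-2)$ has degree $p+1$ before reduction and no evident range-sum interpretation; and ``finitely many shapes'' is misleading---for each of the three values of $n_0$ there are exponentially many choices of the root set $Z$ and leading coefficient $c$, and you give no mechanism to exclude them uniformly.

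The paper's proof lives in a completely different world from your proposal. After reaching $n_0\ge\frac{p-3}{2}$ it derives, from the coefficient of $x^{\frac{p-1}{2}}$ in $f(x-\gamma)$, the congruence
\[
\sum_{\alpha\in A}\Bigl(\tfrac{\alpha-\gamma}{p}\Bigr)\equiv\sum_{\beta\in B}\Bigl(\tfrac{\beta-\gamma}{p}\Bigr)+\tfrac12 a_{\frac{p+1}{2}}\gamma+a_{\frac{p-1}{2}}\pmod p,
\]
normalises by an affine change, and lifts it to an exact equation over $\Z$ with an error term $r(\gamma)\in\{\gamma,\gamma-p\}$. It then uses a Weil-type $L^1$ bound $\sum_\gamma\bigl|\sum_{\alpha\in A}(\tfrac{\alpha-\gamma}{p})\bigr|\le\frac12 p^{3/2}$ together with P\'olya--Vinogradov to force the multiset $B$ to have a single element of multiplicity $\ge p/10$, and from that manufactures $\gtrsim p/80$ values of $\gamma$ on which the $A$-sum exceeds $p/20$, contradicting the $L^1$ bound for large $p$. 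This analytic-number-theory machinery is the heart of the argument, and it is entirely absent from your proposal. If you want an algebraic endgame instead, you would need to supply an actual mechanism that kills all three cases; as written, the proposal has a genuine gap exactly where the theorem is nontrivial.
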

In order to complement this theorem we construct polynomials for small primes $p$ of degree $\frac{p+1}{2}$ with range sum $p$ showing that polynomials of range sum $p$ of degree between $\frac{p-1}{2}$ and $\frac{2p}{3}$ probably exist. 

Further we construct polynomials of degree $2\frac{p-1}{3}$ of range sum $p$, which are not projection polynomials of the sets constructed by G\'acs.

\section{Notation}
Let $S$ be a subset of $\mathbb{F}_p^2$, where $p$ is a prime and $\mathbb{F}_p$ is the field of $p$ elements. We describe the set of \emph{directions} determined by $S$ in the following way. 
Let us consider the nonzero elements of $S-S$. For each nonzero vector in $\mathbb{F}_p^2$ we can assign an element of the projective line $PG(1,p)$ by considering two vectors equivalent if they are nonzero multiples of each other. 

We will treat the elements of $\mathbb{F}_p$ in two different ways. In some cases we identify them with the set $\{0,1, \ldots , p-1 \}$, which is a subset of the integers. We exploit this identification to talk about the range sum of a polynomial (function). Let $f$ be a polynomial in $\F_p[x]$. Every element $f(x)$ of the range can be considered as an element of $\{0,1, \ldots, p-1 \}$ so we may sum the range as integers. We will consider those polynomials where the sum of the range is equal to $p$ so we write $\sum_{x \in \F_p} f(x)=_{\Z} p$, indicating that the numbers we sum are elements of $\Z$. 

The Legendre symbol is denoted by $(\frac{a}{p})$ is equal to 1 if and only of $a$ is a quadratic residue $\pmod{p}$ and it is $-1$ if $a$ is a quadratic nonresidue, and $(\frac{0}{p})=0$.

We will rely on the results of \cite{negyen} so we first recall the essential lemmas that are needed to start the new investigation. 
\begin{lemma}
Let $f$ be a polynomial of degree $\frac{p+1}{2}$ of range sum $p$. Then $f$ has at least $\frac{p-3}{2}$ roots. 
\end{lemma}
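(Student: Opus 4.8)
The plan is to count carefully the contribution of $f$ to the range sum and to exploit the degree constraint via the structure of polynomials of low degree over $\F_p$.

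First I would set up the basic dictionary. Write $f \in \F_p[x]$ with $\deg f = \frac{p+1}{2}$, and for each $x \in \F_p$ let $\overline{f(x)} \in \{0,1,\dots,p-1\}$ denote the integer representative. The hypothesis is $\sum_{x \in \F_p} \overline{f(x)} =_{\Z} p$. The key point is that each term $\overline{f(x)}$ is a nonnegative integer, and the sum is only $p$, which is tiny compared to the trivial upper bound $p(p-1)$. So the values $f(x)$ must be $0$ for almost all $x$: if $f$ had fewer than $\frac{p-3}{2}$ roots, then at least $p - \frac{p-3}{2} + 1 = \frac{p+5}{2}$ values $x$ satisfy $f(x) \neq 0$, hence $\overline{f(x)} \ge 1$ for each of those, already contributing at least $\frac{p+5}{2}$ to the sum. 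That alone does not contradict $\sum = p$ (since $\frac{p+5}{2} < p$ for $p > 5$), so I need to push harder on how large the nonzero values must be.

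The essential extra input is a lower bound on the number of \emph{distinct} nonzero values, or equivalently an upper bound on how often a single nonzero value can be attained, coming from the degree. If $f(x) = c$ has $m$ solutions, then $f - c$ has $m$ roots, so $m \le \deg f = \frac{p+1}{2}$; more usefully, the number of $x$ with $f(x) \ne 0$ equals $\sum_{c \ne 0}(\text{multiplicity of } c \text{ as a value})$, and since $f$ takes at most $\frac{p+1}{2}$ preimages of any value, and $f$ is not constant, one argues that among the nonzero values there is enough spread. Concretely: suppose $f$ has exactly $r$ roots, so $N := p - r$ values of $x$ give $f(x) \ne 0$. The multiset of these $N$ nonzero values, each lying in $\{1,\dots,p-1\}$, has sum (as integers) exactly $p$. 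If some value $c$ is taken $k$ times it contributes $kc \ge k$, so $\sum_{\text{nonzero } x} \overline{f(x)} \ge N$ with equality only if every nonzero value equals $1$; but then $f - 1$ has $N$ roots and $f$ has $r$ roots, giving $f(f-1)$ at least $p$ roots, forcing $f(f-1) \equiv 0$, i.e.\ $f$ is the indicator-type polynomial $x^{(p-1)/2}+1$ up to transformation — but that has degree $\frac{p-1}{2}$, not $\frac{p+1}{2}$, contradiction. Hence the sum strictly exceeds $N$, and refining: the only way to keep the sum down near $p$ while $N$ is large is for almost all nonzero values to be $1$, a near-rigid situation handled by the preceding degree argument. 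Pinning $N = p - r \ge$ something forces $r \ge \frac{p-3}{2}$ after bookkeeping the small deficit $p - N$.

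I expect the main obstacle to be the borderline cases where $f$ takes the value $1$ very often and only a bounded number of other small values (like a few $2$'s or a single large value): there the crude inequality $\sum \ge N$ is nearly tight, and one must rule out these configurations using the precise degree $\frac{p+1}{2}$ rather than just $\le \frac{p-1}{2}$. The clean way is: let $a$ be the number of $x$ with $f(x) \notin \{0,1\}$; then $p = \sum \overline{f(x)} \ge (N - a)\cdot 1 + a \cdot 2 = N + a$, so $a \le p - N = r$ where $r$ is the number of roots, but also $N - a = p - r - a \ge p - 2r$ is the number of $x$ with $f(x) = 1$, so $f - 1$ has at least $p - 2r$ roots, whence $p - 2r \le \deg(f-1) = \frac{p+1}{2}$, giving $r \ge \frac{p-1}{4}$. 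This is weaker than claimed, so the real argument must combine the root count of $f$, of $f-1$, and the sum constraint simultaneously — e.g.\ $f$ has $r$ roots and $f-1$ has $s$ roots with $r + s \le p$ (roots are disjoint) and $r, s \le \frac{p+1}{2}$, together with $p = \sum \overline{f(x)} \ge s + 2(p - r - s) = 2p - 2r - s$, i.e.\ $2r + s \ge p$. Feeding $s \le \frac{p+1}{2}$ gives $2r \ge \frac{p-1}{2}$, i.e.\ $r \ge \frac{p-1}{4}$ again; feeding instead $r \le \frac{p+1}{2}$ into $r + s \le p$ and optimizing is what should yield $r \ge \frac{p-3}{2}$, and closing this optimization — showing the extremal distribution of values is incompatible with $\deg f = \frac{p+1}{2}$ exactly — is the technical heart. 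I would finish by checking the small-prime threshold is absorbed into the "$p$ large" slack already present in these inequalities.
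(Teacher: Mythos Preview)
Your counting argument is clean and correctly derives the inequality $2r+s\ge p$ together with $s\le\frac{p+1}{2}$ (from $\deg(f-1)=\frac{p+1}{2}$), but these two constraints only yield $r\ge\frac{p-1}{4}$, as you yourself observe. The trouble is that this is not a matter of ``optimizing'' more cleverly: the linear program you have set up is genuinely slack. Writing $m_c=|\{x:\overline{f(x)}=c\}|$, the constraints are $\sum_c m_c=p$, $\sum_c c\,m_c=p$, and $m_c\le\frac{p+1}{2}$ for every $c$; the configuration $m_0=\frac{p-1}{4}$, $m_1=\frac{p+1}{2}$, $m_2=\frac{p-1}{4}$ (and $m_c=0$ otherwise) satisfies all of them. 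So no amount of bookkeeping with the value distribution and the single degree bound $\deg(f-c)\le\frac{p+1}{2}$ can push $r$ past $\frac{p-1}{4}$. A genuinely new algebraic input is required.

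The paper does not prove this lemma in the text; it is imported from \cite{negyen} (Proposition~3.2 there). The argument in that reference does not proceed by value-counting alone: it exploits additional polynomial identities coming from products/powers of $f$ reduced modulo $x^p-x$ (in the spirit of Lemma~\ref{lem:sumandleadingcoeff}), which impose constraints on the value distribution beyond the per-level bound $m_c\le\deg f$. That is the missing idea in your plan.

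A smaller issue: in your treatment of the equality case (all nonzero values equal to $1$) you conclude that $f$ must be $x^{(p-1)/2}+1$ up to affine transformation. That is not justified: $f(f-1)$ vanishing as a function on $\F_p$ only says $x^p-x$ divides $f(f-1)$, and characteristic functions of arbitrary subsets of $\F_p$ can have any degree up to $p-1$. The equality case is nevertheless easy to exclude directly, since then the range sum equals $p-r$, forcing $r=0$ and $f\equiv 1$, contradicting $\deg f=\frac{p+1}{2}$; but your stated reason is not correct.
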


The following equality is folclore and the proof is trivial. 
\begin{lemma}\label{lem:sumandleadingcoeff}
Let $\sum_{n=0}^{p-1}a_nx^n$ be a polynomial in $\mathbb{F}_p$. Then
    $$ \sum_{x \in \mathbb{F}_p} \sum_{n=0}^{p-1}a_nx^n \equiv -a_{p-1} \pmod{p}.
    $$
\end{lemma}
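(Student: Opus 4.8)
The plan is to reduce the double sum to the classical evaluation of power sums over $\F_p$. Interchanging the order of summation gives
$\sum_{x \in \F_p} \sum_{n=0}^{p-1} a_n x^n = \sum_{n=0}^{p-1} a_n \bigl( \sum_{x \in \F_p} x^n \bigr)$,
so it suffices to compute $\sigma_n := \sum_{x \in \F_p} x^n \in \F_p$ (equivalently, the residue of $\sum_{x}x^n$ modulo $p$) for each $0 \le n \le p-1$, and then observe that only one term contributes.

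I would split into three cases. For $n = 0$ we have $x^0 = 1$ for every $x \in \F_p$ (with the usual convention $0^0 = 1$), so $\sigma_0 = p \equiv 0 \pmod p$. For $n = p-1$, Fermat's little theorem gives $x^{p-1} = 1$ for all $x \in \F_p^{\times}$ and $0^{p-1} = 0$, hence $\sigma_{p-1} = p-1 \equiv -1 \pmod p$. For the remaining range $1 \le n \le p-2$, fix a generator $g$ of the cyclic group $\F_p^{\times}$ and write $\sigma_n = \sum_{x \in \F_p^{\times}} x^n = \sum_{j=0}^{p-2} g^{jn}$, a geometric series with ratio $g^n$; since $1 \le n \le p-2$ we have $g^n \neq 1$, so the sum equals $\frac{g^{(p-1)n}-1}{g^n-1} = 0$ because $g^{(p-1)n} = (g^{p-1})^n = 1$.

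Combining the three cases, every term of $\sum_{n=0}^{p-1} a_n \sigma_n$ vanishes except the one with $n = p-1$, which contributes $a_{p-1}\sigma_{p-1} = -a_{p-1}$, giving $\sum_{x \in \F_p} \sum_{n=0}^{p-1} a_n x^n \equiv -a_{p-1} \pmod p$ as claimed. There is no genuine obstacle: the only two points deserving a word of care are that the $n = 0$ term equals $p$ (not $p-1$), and that the geometric-series argument requires $g^n \neq 1$, which is precisely why the case $1 \le n \le p-2$ is isolated from $n = p-1$.
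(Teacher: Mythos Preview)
Your proof is correct and entirely standard: interchanging the order of summation and evaluating the power sums $\sigma_n=\sum_{x\in\F_p}x^n$ via Fermat's little theorem and a geometric-series argument with a primitive root is exactly the expected route. The paper itself does not give a proof, simply declaring the lemma folklore with a trivial proof, so there is nothing to compare against beyond noting that your argument supplies precisely the details the paper omits.
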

Another important information we can derive Proposition 3.2 in \cite{negyen} is that the number of different root of a polynomial of degree $\ee$ of range sum $p$ is at least $\frac{p-3}{2}$. Obviously, the number of roots is at most $\frac{p+3}{2}$.

Let us denote the set of roots of $f$ by $\alpha_1, \ldots , \alpha_{k} $, where $\frac{p-3}{2} \le k \le \ee$. Let $B=\{ \beta_1, \ldots, \beta_{k} \}$ be a multiset such that $\beta \in B$ if and only if $f(\beta)>1$ and the multiplicity of $\beta$ in $B$ is $f(\beta)-1$. Since the range sum of $f$ is $p$ we have that   $k$ is equal to the number of roots of $f$.





We will also use the following results proved in \cite{adam}.
\begin{theorem}\label{thm:regialphaabszertekbecsles}
    For any $A \subset \mathbb F_p $ we have
\begin{displaymath}
     \sum_{\gamma \in \mathbb F_p} \bigg | \sum_{\alpha \in A } \bigg( \frac{\alpha - \gamma}p \bigg)  \bigg | \leq  \frac{1}{2}p^{\frac{3}{2}}.
\end{displaymath}
\end{theorem}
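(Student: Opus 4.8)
The plan is to control the sum of absolute values by its second moment through Cauchy--Schwarz, and then to evaluate that second moment exactly using the classical formula for character sums of quadratic polynomials. Throughout, write
\[
S(\gamma) = \sum_{\alpha \in A} \legendre{\alpha - \gamma}{p}
\]
for the inner sum, so that the quantity to be bounded is $\sum_{\gamma \in \F_p} |S(\gamma)|$.

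First I would apply the Cauchy--Schwarz inequality over the $p$ values of $\gamma$, obtaining
\[
\sum_{\gamma \in \F_p} |S(\gamma)| \le \sqrt{p}\,\Bigl( \sum_{\gamma \in \F_p} S(\gamma)^2 \Bigr)^{1/2}.
\]
This reduces the problem to showing that the second moment $\sum_{\gamma} S(\gamma)^2$ is at most $p^2/4$, since the right-hand side would then be at most $\sqrt{p}\cdot \tfrac{p}{2} = \tfrac12 p^{3/2}$, exactly the claimed bound.

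Next I would expand the square and interchange the order of summation, writing
\[
\sum_{\gamma \in \F_p} S(\gamma)^2 = \sum_{\alpha, \beta \in A}\ \sum_{\gamma \in \F_p} \legendre{(\alpha-\gamma)(\beta-\gamma)}{p}.
\]
The inner sum is the character sum of the monic quadratic $(\gamma-\alpha)(\gamma-\beta)$ (replacing each factor $\alpha-\gamma$ by $\gamma-\alpha$ does not change the Legendre symbol). By the standard evaluation of $\sum_{\gamma}\legendre{\gamma^2+b\gamma+c}{p}$, the inner sum equals $p-1$ when $\alpha=\beta$ (the quadratic is a perfect square, contributing $1$ at each of the $p-1$ points where it is nonzero and $0$ at $\gamma=\alpha$) and equals $-1$ when $\alpha\neq\beta$ (nonzero discriminant, leading coefficient a square). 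Splitting the double sum into the $|A|$ diagonal terms and the $|A|^2-|A|$ off-diagonal terms then gives
\[
\sum_{\gamma \in \F_p} S(\gamma)^2 = |A|(p-1) - (|A|^2 - |A|) = |A|\,\bigl(p - |A|\bigr).
\]

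Finally, $|A|(p-|A|) \le p^2/4$ by the AM--GM inequality, which completes the estimate. The argument is entirely self-contained, and I do not expect a genuine obstacle; the only step requiring care is the quadratic character-sum evaluation, in particular the bookkeeping of the degenerate diagonal terms $\alpha=\beta$, where the symbol vanishes at $\gamma=\alpha$ and equals $1$ elsewhere. It is worth observing that the Cauchy--Schwarz step is essentially lossless here: the second moment attains its maximum $p^2/4$ precisely at $|A|=p/2$, so the constant $\tfrac12$ in the statement is exactly what this method produces.
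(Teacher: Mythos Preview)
Your proof is correct. The paper does not actually prove this theorem---it is quoted from \cite{adam}---but your argument is exactly the method the paper uses for the analogous multiset bound (Proposition~\ref{thm:kibecsles}): Cauchy--Schwarz followed by expanding the square and applying the standard evaluation $\sum_\gamma \bigl(\tfrac{(\gamma-\alpha)(\gamma-\beta)}{p}\bigr)=-1$ for $\alpha\neq\beta$, so there is no methodological difference to report.
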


As a corollary of Theorem \ref{thm:regialphaabszertekbecsles} we obtain that the number of $\gamma \in \F_p$ such that $\sum_{\alpha \in A}\left( \frac{\alpha-\gamma}{p}\right)$ is of large absolute value, is small. More precisely we have the following. 
\begin{lemma}\label{lem:S becslese}
Let $S=\left\{ \gamma \in \F_p \mid  \left| \sum_{\alpha \in A}\left( \frac{\alpha-\gamma}{p}\right)  \right|  \ge p^\frac{2}{3} \right\}$. Then $|S| \le p^{\frac{2}{3}}$.   
\end{lemma}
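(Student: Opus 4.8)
The plan is to replace the first moment estimate of Theorem~\ref{thm:regialphaabszertekbecsles} by a \emph{second} moment computation, because a direct application of Markov's inequality to the first moment is too weak for the stated exponent. Writing $T(\gamma)=\sum_{\alpha\in A}\legendre{\alpha-\gamma}{p}$, the bound $\sum_{\gamma\in\F_p}|T(\gamma)|\le \tfrac12 p^{3/2}$ together with $|T(\gamma)|\ge p^{2/3}$ for $\gamma\in S$ only yields $|S|\cdot p^{2/3}\le \tfrac12 p^{3/2}$, i.e. $|S|\le \tfrac12 p^{5/6}$, which is larger than $p^{2/3}$. Instead I would evaluate $\sum_{\gamma\in\F_p}T(\gamma)^2$ exactly and apply Markov's inequality to the squares, where the larger weight on big values of $|T(\gamma)|$ is exactly what produces the sharper exponent.

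The key step is the evaluation of the second moment. Expanding the square and interchanging the order of summation gives
\[
\sum_{\gamma\in\F_p}T(\gamma)^2=\sum_{\alpha,\beta\in A}\ \sum_{\gamma\in\F_p}\legendre{(\alpha-\gamma)(\beta-\gamma)}{p}.
\]
For the diagonal terms $\alpha=\beta$ the inner sum equals $p-1$, since $(\alpha-\gamma)^2$ is a nonzero square for every $\gamma\neq\alpha$ and is $0$ when $\gamma=\alpha$. For $\alpha\neq\beta$ the inner sum is the Legendre-symbol sum of a quadratic with nonzero discriminant $(\alpha-\beta)^2$, which takes the standard value $-1$. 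There are $|A|$ diagonal and $|A|(|A|-1)$ off-diagonal ordered pairs, hence
\[
\sum_{\gamma\in\F_p}T(\gamma)^2=|A|(p-1)-|A|(|A|-1)=|A|(p-|A|)\le \frac{p^2}{4},
\]
the last inequality being AM--GM.

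Finally I would finish by Markov's inequality applied to $T(\gamma)^2$. Every $\gamma\in S$ satisfies $T(\gamma)^2\ge p^{4/3}$, so
\[
|S|\,p^{4/3}\le \sum_{\gamma\in S}T(\gamma)^2\le \sum_{\gamma\in\F_p}T(\gamma)^2\le \frac{p^2}{4},
\]
whence $|S|\le \tfrac14 p^{2/3}\le p^{2/3}$, as claimed. The only nonroutine point is the off-diagonal character sum evaluation $\sum_{\gamma\in\F_p}\legendre{(\alpha-\gamma)(\beta-\gamma)}{p}=-1$ for $\alpha\neq\beta$; everything else is bookkeeping. The conceptual obstacle is recognizing that the stated exponent $2/3$ is out of reach of the first moment bound of Theorem~\ref{thm:regialphaabszertekbecsles} and forces the use of the exact $\ell^2$ identity $\sum_\gamma T(\gamma)^2=|A|(p-|A|)$, where the savings arise precisely because large values of $|T(\gamma)|$ consume the quadratic budget $p^2/4$ much faster.
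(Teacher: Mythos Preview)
Your proof is correct. The identity $\sum_{\gamma}T(\gamma)^2=|A|(p-|A|)\le p^2/4$ follows exactly from the diagonal/off-diagonal split you describe, and Markov applied to the squares gives $|S|\le \tfrac14 p^{2/3}\le p^{2/3}$.

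The paper does not actually supply a proof: it simply presents the lemma ``as a corollary of Theorem~\ref{thm:regialphaabszertekbecsles}''. You are right that the first-moment bound of that theorem, taken at face value, yields only $|S|\le \tfrac12 p^{5/6}$, which misses the stated exponent $2/3$. Your direct $\ell^2$ computation is the standard way to reach $p^{2/3}$; it is also precisely the identity that underlies the proof of Theorem~\ref{thm:regialphaabszertekbecsles} itself (via Cauchy--Schwarz on the second moment), so the paper's attribution points to the right circle of ideas but is literally too coarse. What your approach buys is exactly the sharper exponent the lemma claims, at the cost of redoing the character-sum evaluation rather than quoting the packaged $\ell^1$ estimate.
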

This set $S$ contains those number of $\F_p$ which we considered pathological and we will usually omit them from any further considerations. Lemma \ref{lem:S becslese} that $S$ is fairly small.

Let $B$ be the multiset consisting of those elements $x$ of $\F_p$ for which $f(x) > 1$ such that the weight of $x$ in $B$ is $f(x)-1$. 
It is easy to see that $|B|$ is equal to $|A|$, which is the number of different roots of $f$. We write
\begin{displaymath}
    B = \bigcup_{j= 1}^{n} B_j,
\end{displaymath}
where $B_j$ are all the homogeneous multisets containing all elements of $A$, which are equal to some element of $B$. Let 
\begin{equation}\label{eq k_j}
    k_j := | B_j |  \mbox{ and } k_j := |B_j|.
\end{equation}
The following proposition also appears in \cite{adam} but for sake of completeness we provide a proof here.
\begin{prop}\label{thm:kibecsles}
    For the $B$ multiset it holds that:
\begin{displaymath}
     \sum_{\gamma \in \mathbb F_p} \bigg | \sum_{\beta\in B } \bigg( \frac{\beta - \gamma}p \bigg)  \bigg | \leq p \sqrt{\sum_{j =1}^{n} k_j^2}.
\end{displaymath}
\end{prop}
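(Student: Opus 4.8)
The plan is to reduce the estimate for $B$ to the single-set bound of Theorem~\ref{thm:regialphaabszertekbecsles} applied to each homogeneous piece $B_j$, and then combine the pieces with the triangle inequality and Cauchy--Schwarz. Writing $B=\bigcup_{j=1}^n B_j$ as in the excerpt, each $B_j$ is (as a set) equal to $A$, so it is natural to exploit that $\sum_{\beta\in B_j}\big(\frac{\beta-\gamma}{p}\big)$ does not depend on $j$ \emph{as a function on $\F_p$}; but to keep the argument robust I would not use that and instead treat each $B_j$ on its own footing. First, by the triangle inequality applied inside the absolute value,
\begin{displaymath}
    \sum_{\gamma\in\F_p}\bigg|\sum_{\beta\in B}\bigg(\frac{\beta-\gamma}{p}\bigg)\bigg|
    \;=\;\sum_{\gamma\in\F_p}\bigg|\sum_{j=1}^n\sum_{\beta\in B_j}\bigg(\frac{\beta-\gamma}{p}\bigg)\bigg|
    \;\le\;\sum_{j=1}^n\sum_{\gamma\in\F_p}\bigg|\sum_{\beta\in B_j}\bigg(\frac{\beta-\gamma}{p}\bigg)\bigg|.
\end{displaymath}

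Next I would bound each inner double sum. Since $B_j$ is a homogeneous multiset, every element appearing in it has the same multiplicity; denoting that common multiplicity by $m_j$ and the underlying set by $A$ (so $k_j=|B_j|=m_j|A|$), we get $\sum_{\beta\in B_j}\big(\frac{\beta-\gamma}{p}\big)=m_j\sum_{\alpha\in A}\big(\frac{\alpha-\gamma}{p}\big)$, hence by Theorem~\ref{thm:regialphaabszertekbecsles},
\begin{displaymath}
    \sum_{\gamma\in\F_p}\bigg|\sum_{\beta\in B_j}\bigg(\frac{\beta-\gamma}{p}\bigg)\bigg|
    = m_j\sum_{\gamma\in\F_p}\bigg|\sum_{\alpha\in A}\bigg(\frac{\alpha-\gamma}{p}\bigg)\bigg|
    \le \tfrac12\, m_j\, p^{3/2}.
\end{displaymath}
If one does not wish to use homogeneity this strongly, the same bound $\tfrac12 p^{3/2}$ holds for $B_j$ directly by applying Theorem~\ref{thm:regialphaabszertekbecsles} to the \emph{set} $A$ and noting $|B_j|\le k_j$; either way one obtains a per-$j$ bound that is at most a constant multiple of $p^{3/2}$, and in fact we want the cleaner form with $p\sqrt{k_j}$: since $|A|\le p$ we have $\tfrac12 p^{3/2}\le p\sqrt{|A|\cdot p}/2$; a more careful accounting (using that each $B_j$ has $k_j$ elements and $k_j\le p$) gives $\sum_{\gamma}|\sum_{\beta\in B_j}(\frac{\beta-\gamma}{p})|\le p\sqrt{k_j}$ — this is the shape that makes the final Cauchy--Schwarz step produce exactly the claimed right-hand side.

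Summing over $j$ and applying Cauchy--Schwarz to $\sum_{j=1}^n\sqrt{k_j}=\sum_{j=1}^n 1\cdot\sqrt{k_j}$ is tempting but would introduce a spurious $\sqrt n$; instead I would apply Cauchy--Schwarz in the form $\sum_j p\sqrt{k_j}\le p\sqrt{n}\sqrt{\sum_j k_j}$ only if a factor $\sqrt n$ is acceptable, but the statement wants $p\sqrt{\sum_j k_j^2}$. So the correct final step is: bound each term by $p\,k_j/\sqrt{k_{\max}}$ or, more simply, observe $\sum_j\sqrt{k_j}\le\sqrt{\sum_j k_j^2}\cdot(\text{something})$ — here the honest route is $\big(\sum_j\sqrt{k_j}\big)^2\le n\sum_j k_j$, which is not what we want, so the intended argument must bound the per-$j$ contribution by $p\,k_j$ rather than $p\sqrt{k_j}$ and then use $\sum_j k_j\le\sqrt n\,\sqrt{\sum_j k_j^2}$; alternatively, and most cleanly, bound it by something proportional to $k_j$ and finish with $\sum_j c\,k_j\le c\sqrt n\sqrt{\sum k_j^2}$. \textbf{The main obstacle} is precisely getting the per-piece estimate into the exact form whose sum telescopes to $p\sqrt{\sum_j k_j^2}$ without losing constants or picking up an extra $\sqrt n$: this requires using Theorem~\ref{thm:regialphaabszertekbecsles} not on $A$ but on each $B_j$ viewed as a multiset of size $k_j$ together with the trivial bound $k_j\le p$, so that $\sum_\gamma|\sum_{\beta\in B_j}(\frac{\beta-\gamma}{p})|\le \tfrac12 p^{3/2}\le\tfrac12 p\sqrt{k_j}\cdot\sqrt{p/k_j}$, and then invoking Cauchy--Schwarz $\sum_j\sqrt{k_j}\cdot 1\le\sqrt{\sum_j k_j}\cdot\sqrt n$ — so I expect the final write-up to either carry a harmless extra factor or to replace $\sqrt{\sum k_j^2}$ bookkeeping with the observation that each $k_j\ge 1$ forces $\sum_j\sqrt{k_j}\le\sqrt{\sum_j k_j^2}$ when $n$ is bounded appropriately, and reconciling that is the one delicate point.
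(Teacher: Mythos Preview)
Your approach has a genuine gap, and in fact the triangle-inequality-first strategy cannot reach the stated bound. Once you split
\[
\sum_{\gamma}\Big|\sum_{\beta\in B}\Big(\tfrac{\beta-\gamma}{p}\Big)\Big|
\le \sum_{j=1}^n \sum_{\gamma}\Big|\sum_{\beta\in B_j}\Big(\tfrac{\beta-\gamma}{p}\Big)\Big|,
\]
you have thrown away all cancellation between the different pieces $B_j$, and from here the best you can do is the trivial bound: each $B_j$ consists of $k_j$ copies of a \emph{single} element $b_j$ (not a copy of $A$; you misread the definition), so $\sum_{\beta\in B_j}\big(\tfrac{\beta-\gamma}{p}\big)=k_j\big(\tfrac{b_j-\gamma}{p}\big)$ and hence $\sum_\gamma\big|\sum_{\beta\in B_j}(\cdot)\big|=k_j(p-1)$. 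Summing over $j$ gives $(p-1)\sum_j k_j$, and since $\sum_j k_j\ge \sqrt{\sum_j k_j^2}$ this is never smaller than $(p-1)\sqrt{\sum_j k_j^2}$. No amount of Cauchy--Schwarz bookkeeping on $\sum_j k_j$ or $\sum_j\sqrt{k_j}$ can recover the factor you lost; the inequality simply goes the wrong way. This is exactly why your final paragraph keeps circling without closing.

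The paper's argument is structurally different: apply Cauchy--Schwarz \emph{before} decomposing, in the form
\[
\sum_{\gamma}\Big|\sum_{\beta\in B}\Big(\tfrac{\beta-\gamma}{p}\Big)\Big|
\le \sqrt{p}\,\sqrt{\sum_{\gamma}\Big(\sum_{\beta\in B}\Big(\tfrac{\beta-\gamma}{p}\Big)\Big)^{2}},
\]
then expand the square. The cross terms with distinct underlying values $\beta_1\ne\beta_2$ contribute $\sum_\gamma\big(\tfrac{\beta_1-\gamma}{p}\big)\big(\tfrac{\beta_2-\gamma}{p}\big)=-1\le 0$, so only the ``diagonal'' blocks survive, giving $\sum_\gamma(\cdot)^2\le p\sum_j k_j^2$ and hence the claim. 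The key ingredient you are missing is this orthogonality of shifted Legendre symbols, which is invisible after a triangle inequality but is precisely what makes the $L^2$ computation sharp.
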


\begin{proof}
    We have that
\begin{displaymath}
     \sum_{\gamma \in \mathbb F_p} \bigg | \sum_{\beta\in B } \bigg( \frac{\beta - \gamma}p \bigg)  \bigg | \leq \sqrt{p}\sqrt{\sum_{\gamma \in \mathbb F_p} \bigg ( \sum_{\beta\in B } \bigg( \frac{\beta - \gamma}p \bigg) \bigg)^2 }.
\end{displaymath}
\begin{displaymath}
     \sum_{\gamma \in \mathbb F_p} \bigg ( \sum_{\beta\in B } \bigg( \frac{\beta - \gamma}p \bigg) \bigg)^2 = \sum_{\gamma} \sum_{j = 1}^{n} \bigg( \sum_{\beta \in B_j} \bigg( \frac{\beta-\gamma}{p} \bigg) \bigg)^2  +  2 \sum_{\beta_1 \neq \beta_2} \sum_{\gamma} \bigg( \frac{\beta_1 - \gamma}p \bigg) \bigg( \frac{\beta_2 - \gamma}p \bigg) \leq
\end{displaymath}
\begin{displaymath}
    \leq p \sum_{j=1}^{n} k^2_j.
\end{displaymath}
It follows from the previous calculation that 
\begin{displaymath}
\sum_{\gamma \in \mathbb F_p} \bigg | \sum_{\beta\in B } \bigg( \frac{\beta - \gamma}p \bigg)  \bigg | \leq p \sqrt{\sum_{j=1}^{n} k^2_j}.
\end{displaymath}

\end{proof}
\section{Polynomials of degree $\frac{p+1}{2}$.}
Our purpose in this section is to prove Theorem \ref{thm1}. Thus we will show that there is no polynomial in $\mathbb{F}_p[x]$ of degree $\frac{p+1}{2}$ with range sum $p$ if $p$ is a prime, which is large enough.

Assume $f$ is a polynomial of degree $\ee$ with  $\sum_{x \in \F_p} f(x) = p $. We write 
$$f(x)=a_{\frac{p+1}{2}}x^\ee+a_{\frac{p-1}{2}}x^\dd + \ldots +a_1x+a_0.$$

The coefficient of $x^\dd$ of $f(x-\gamma) $  is equal to $-\frac{p+1}{2} \cdot a_{\frac{p+1}2} \cdot \gamma + a_{\frac{p-1}2}$.
Then it directly follows from Lemma that \ref{lem:sumandleadingcoeff}
\begin{displaymath}
    \sum_{x} x^{\frac{p-1}2} f(x+\gamma) \equiv -\left( -\frac{p+1}{2} \cdot a_{\frac{p+1}2} \cdot \gamma + a_{\frac{p-1}2} \right) \pmod{p}.
\end{displaymath}
We remind that as in \cite{negyen} we denote by $A$ the set of roots of $f$ and $B$ is a multiset consisting of those elements $x$ of $\F_p$ for which $f(x)>1$ counted with multiplicity $f(x)-1$.  

It was proved in \cite{negyen} that (see equation (2)) that for every $\gamma \in \F_p$
\begin{equation}\label{eq:elso}
        \sum_{\alpha \in A} \bigg( \frac{\alpha-\gamma}{p} \bigg) \equiv \sum_{\beta \in B} \bigg( \frac{\beta-\gamma}p \bigg) + \frac{1}{2}a_{\frac{p+1}{2}} \gamma + a_{\frac{p-1}2} \pmod{p}.
\end{equation}

Using suitable affine transformations on $f$ (i.e. replace $f$ by $f(ax+b)$ for some $a\in \F_p^*$, $b\in \F_p$) we may simplify equation \eqref{eq:elso}. We first use a suitable translation on $f$ to obtain the following: 
\begin{displaymath}
     \sum_{\alpha \in A'} \bigg( \frac{\alpha-\gamma}{p} \bigg) \equiv \sum_{\beta \in B'} \bigg( \frac{\beta-\gamma}p \bigg) + \frac{1}{2}a_{\frac{p+1}{2}} \gamma \pmod{p}.
\end{displaymath}
As a result the set $A$ and the multiset $B$ are also translated by the same element of $\F_p$. Using another transformation we obtain
\begin{displaymath}
     \sum_{\alpha \in A} \bigg( \frac{\alpha- 2 a_{\frac{p+1}{2}}^{-1}  \gamma}{p} \bigg) \equiv \sum_{\beta \in B} \bigg( \frac{\beta- 2 a_{\frac{p+1}{2}}^{-1}\gamma}p \bigg) +  \gamma \pmod{p}.
\end{displaymath}
We replace $A$ by $1/2 a_{\frac{p+1}{2}} A$ and $B$ by $1/2 a_{\frac{p+1}{2}} B$. Thus
\begin{displaymath}
    \sum_{\alpha \in A} \bigg( \frac{2 a_{\frac{p+1}{2}}^{-1} \alpha- 2 a_{\frac{p+1}{2}}^{-1}  \gamma}{p} \bigg) \equiv \sum_{\beta \in B} \bigg( \frac{2 a_{\frac{p+1}{2}}^{-1}\beta- 2 a_{\frac{p+1}{2}}^{-1}\gamma}p \bigg) +  \gamma \pmod{p},
\end{displaymath}
\begin{displaymath}
    \sum_{\alpha \in A} \bigg( \frac{2 a_{\frac{p+1}{2}}^{-1}}p \bigg) \bigg( \frac{ \alpha-   \gamma}{p} \bigg) \equiv \sum_{\beta \in B} \bigg( \frac{2 a_{\frac{p+1}{2}}^{-1}}p \bigg) \bigg( \frac{\beta- \gamma}p \bigg) +  \gamma \pmod{p}.
\end{displaymath}
We simplify with the nonzero common term to obtain
\begin{displaymath}
     \sum_{\alpha \in A} \bigg( \frac{ \alpha-   \gamma}{p} \bigg) \equiv \sum_{\beta \in B}  \bigg( \frac{\beta- \gamma}p \bigg) \pm \gamma  \pmod{p}.
\end{displaymath}
Notice that $\pm$ does not depend on the value of $\gamma$, only on the coefficient $\alpha_{\frac{p+1}2}$. First let
\begin{displaymath}
     \sum_{\alpha \in A} \bigg( \frac{ \alpha-   \gamma}{p} \bigg) \equiv \sum_{\beta \in B}  \bigg( \frac{\beta- \gamma}p \bigg) + \gamma.
\end{displaymath}
\begin{displaymath}
     \sum_{\alpha \in A} \bigg( \frac{ \alpha-   \gamma}{p} \bigg) - \gamma \equiv \sum_{\beta \in B}  \bigg( \frac{\beta- \gamma}p \bigg) .
\end{displaymath}
The equation above can be lifted up to the ring of integers. We lift up the Legendre symbols $(\frac{x}p)$  to the set of $\{-1,0,1\} \subset \Z$. This means that
\begin{displaymath}
    -\frac{p+1}2 \leq -|A| \leq \sum_{\alpha \in A} \bigg( \frac{\alpha -  \gamma}p \bigg) \leq |A| \leq \frac{p+1}2,
\end{displaymath}
and 
\begin{displaymath}
   - \frac{p+1}2 \leq \sum_{\beta \in B} \bigg( \frac{\beta -  \gamma}p \bigg) \leq \frac{p+1}2.
\end{displaymath}
Furthermore, the number of different quadratic residues is $\frac{p-1}2$ and the number of quadratic non-residues is also $\frac{p-1}2$ so if $|A|= \frac{p-3}{2} \mbox{ or } \ee$, then 
\begin{displaymath}
    -\frac{p-3}2 \leq \sum_{\alpha \in A} \bigg( \frac{\alpha -  \gamma}p \bigg) \leq \frac{p-3}2.
\end{displaymath}
We again identify the elements of $\F_p$ with $\{0,1,\ldots, p-1\}$. Let $r:\mathbb{F}_p=\{0,1,\ldots, p-1\} \to \mathbb{Z}$ such that 
\begin{equation}\label{eq:foegyenlet}
     \sum_{\alpha \in A} \bigg( \frac{ \alpha-   \gamma}{p} \bigg) = \sum_{\beta \in B}  \bigg( \frac{\beta- \gamma}p \bigg) + r(\gamma).
\end{equation}
holds for every $\gamma \in \mathbb{F}_p$. 
It follows from the previous simple arguments on the absolute value of these sums  in the previous equation that $|r(\gamma)| \le 2p$. Moreover, it is easy to that equation can only occur if $x^\dd-1$ divides the polynomial $f$ in $\F_p[x]$. It is not too difficult to see that the range sum of such polynomials of degree $\ee$ is larger than $p$.
Thus we may assume
that $r(\gamma)$ is $ -p,0 \mbox{ or }p$  if $\gamma = 0$, and it is equal to $\gamma$ or $\gamma -p$ if $\gamma \neq 0$. 

By applying the triangle inequality and Theorem \ref{thm:regialphaabszertekbecsles} we obtain. 
\begin{displaymath}
    \sum_{\gamma \in \mathbb{F}_p} \bigg| \sum_{\alpha \in A } \bigg( \frac{\alpha-\gamma}p\bigg) - r(\gamma)\bigg| \geq \sum_{\gamma \in \F_p} |r(\gamma)|  - p^{\frac32} \geq \frac{p^2}{4} - \frac{1}{2}p^{\frac32}.
\end{displaymath}
By the previously used $k_i$ multiplicities  of the elements of the multiset $B$ we get
\begin{displaymath}
    p \sqrt{\sum k^2_j} \geq \frac{p^2}4 -p ^{\frac32}.
\end{displaymath}
We may suppose that $k_1 \geq k_2 \geq \ldots \geq k_n$. 

\begin{lemma}
 $k_1 \geq \frac{|B|}{5}$. Thus there is $\beta' \in B$ whose multiplicity in $B$ is at least $\frac{p-1}{10}$. 
\end{lemma}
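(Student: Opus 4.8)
The plan is to convert the inequality $p\sqrt{\sum_{j=1}^n k_j^2}\ge \frac{p^2}{4}-p^{3/2}$ established just above (which itself came from Proposition~\ref{thm:kibecsles}, Theorem~\ref{thm:regialphaabszertekbecsles} and the explicit evaluation of $\sum_\gamma|r(\gamma)|$) into a lower bound on the largest multiplicity $k_1$. The only tool needed is the elementary observation that, since $k_1=\max_j k_j$, one has $\sum_{j=1}^n k_j^2\le k_1\sum_{j=1}^n k_j=k_1|B|$; this bridge — which is essentially the only way to pass from a lower bound on a sum of squares to a lower bound on the maximum — turns the displayed inequality directly into a lower bound on $k_1$.

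Concretely, I would first rewrite the displayed inequality as $\sqrt{\sum_j k_j^2}\ge\frac{p}{4}-\sqrt{p}$, which for $p>32$ (so that the right-hand side is positive) gives $\sum_j k_j^2\ge(\frac p4-\sqrt p)^2$. Combining with $\sum_j k_j^2\le k_1|B|$ yields
\[
k_1\ \ge\ \frac{1}{|B|}\Big(\frac{p}{4}-\sqrt{p}\Big)^{2}.
\]
Since $|B|=|A|$ is the number of roots of a polynomial of degree $\ee$, we have $|B|\le\ee$, so to get $k_1\ge\frac{|B|}{5}$ it suffices to check $5(\frac p4-\sqrt p)^2\ge|B|^2$, and using $|B|\le\frac{p+1}{2}$ this reduces to the purely numerical inequality $20(\frac p4-\sqrt p)^2\ge(p+1)^2$, i.e.\ $p^2-40p^{3/2}+72p-4\ge 0$, which holds for all sufficiently large $p$. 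For the last assertion, $k_1$ is by construction the multiplicity in $B$ of some element $\beta'$; and since $|B|\ge\frac{p-3}{2}$, the bound $k_1\ge\frac{1}{|B|}(\frac p4-\sqrt p)^2\ge\frac{2}{p+1}(\frac p4-\sqrt p)^2$, whose right-hand side is asymptotic to $\frac p8$, exceeds $\frac{p-1}{10}$ for $p$ large --- again a one-line numerical check, this time with ample room.

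I do not expect a genuine obstacle in this lemma: once the mean--square estimate $p\sqrt{\sum_j k_j^2}\ge\frac{p^2}{4}-p^{3/2}$ is available, everything is a short chain of elementary inequalities resting on $\sum_j k_j^2\le k_1|B|$ and $|B|\le\ee$. The only point that demands a little care is carrying the $O(p^{3/2})$ error term through the estimates so that the concluding numerical inequalities really hold in the intended range of primes; the generous gap between the leading terms ($\frac54 p^2$ against $(p+1)^2$, and $\sim\frac p8$ against $\frac{p-1}{10}$) makes this routine, and if a sharper threshold is wanted one would have to feed back a better lower bound for $\sum_\gamma|r(\gamma)|$ or exploit $|A|\ge\frac{p-3}{2}$ more carefully.
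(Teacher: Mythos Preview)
Your argument is correct and is essentially the same as the paper's: both rest on the elementary bound $\sum_j k_j^2\le k_1\sum_j k_j=k_1|B|$ combined with the displayed inequality $p\sqrt{\sum_j k_j^2}\ge\frac{p^2}{4}-p^{3/2}$. The paper phrases it by contradiction (assuming $k_1<|B|/5$ and bounding $\sum_j k_j^2\le 5(|B|/5)^2$) while you argue directly, but the content is identical; your treatment of the $\frac{p-1}{10}$ conclusion is in fact slightly more careful than the paper's.
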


\begin{proof}
    First suppose that the statement of lemma is false. Then
\begin{displaymath}
    p \sqrt{\sum k^2_j} \leq p \sqrt{5\bigg(\frac{|B|}5\bigg)^2} = \frac{p^2}{2 \sqrt 5} < \frac{p^2}4 - p^{\frac32},
\end{displaymath}
which is a contradiction.

\end{proof}
Let 
$$
\Gamma_{-p}=\{ \gamma \in \F_p \colon r(\gamma) = \gamma-p\}.
$$
\begin{lemma}\label{lem:32}
    Let $\Gamma'$ denote the intersection of 
    $ [\frac{9p}{20}, \frac{19p}{40}] $ with $\Gamma_{-p}$. Then $\left| \Gamma' \right| \le 21\sqrt{p}$.
\end{lemma}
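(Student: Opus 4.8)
The plan is to extract, for each $\gamma\in\Gamma'$, a strong lower bound on $\bigl|\sum_{\alpha\in A}\bigl(\frac{\alpha-\gamma}{p}\bigr)\bigr|$ and then to play this off against the global $L^1$‑estimate of Theorem~\ref{thm:regialphaabszertekbecsles}. The crucial observation is that for $\gamma$ in the interval $[\tfrac{9p}{20},\tfrac{19p}{40}]$ the value $r(\gamma)=\gamma-p$ is very negative, while in \eqref{eq:foegyenlet} the $B$‑side sum is \emph{uniformly} bounded by $|B|=|A|\le\ee$, regardless of the multiplicities $k_j$; this forces the $A$‑side sum to be very negative as well, and the $A$‑side is exactly what Theorem~\ref{thm:regialphaabszertekbecsles} controls. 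Note in particular that we deliberately estimate the $B$‑side trivially rather than invoking Proposition~\ref{thm:kibecsles}.

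Concretely, first I would note that $\Gamma'\subseteq[\tfrac{9p}{20},\tfrac{19p}{40}]\subseteq(0,p)$, so $0\notin\Gamma'$ and, since $\Gamma'\subseteq\Gamma_{-p}$, we have $r(\gamma)=\gamma-p\le\tfrac{19p}{40}-p=-\tfrac{21p}{40}$ for every $\gamma\in\Gamma'$. Plugging this into \eqref{eq:foegyenlet} and using $\sum_{\beta\in B}\bigl(\frac{\beta-\gamma}{p}\bigr)\le|B|=|A|\le\ee$ gives
\[
\sum_{\alpha\in A}\Bigl(\frac{\alpha-\gamma}{p}\Bigr)\le\frac{p+1}{2}-\frac{21p}{40}=\frac{20-p}{40}<0 ,
\]
so that $\bigl|\sum_{\alpha\in A}\bigl(\frac{\alpha-\gamma}{p}\bigr)\bigr|\ge\frac{p-20}{40}$ for all $\gamma\in\Gamma'$. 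Summing this over $\Gamma'$ and comparing with Theorem~\ref{thm:regialphaabszertekbecsles},
\[
|\Gamma'|\cdot\frac{p-20}{40}\le\sum_{\gamma\in\Gamma'}\Bigl|\sum_{\alpha\in A}\Bigl(\frac{\alpha-\gamma}{p}\Bigr)\Bigr|\le\sum_{\gamma\in\F_p}\Bigl|\sum_{\alpha\in A}\Bigl(\frac{\alpha-\gamma}{p}\Bigr)\Bigr|\le\tfrac12 p^{3/2},
\]
whence $|\Gamma'|\le\dfrac{20\,p^{3/2}}{p-20}$. A short computation shows $\dfrac{20\,p^{3/2}}{p-20}\le 21\sqrt p$ as soon as $p\ge 420$; for the remaining primes $p<420$ one simply uses that $\Gamma'$ lies in an interval of length $\tfrac{p}{40}$, so $|\Gamma'|\le\tfrac{p}{40}+1<12\le 21\sqrt p$ (recall $p>32$), and the lemma follows in all cases.

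I do not expect a genuine obstacle here. The only points requiring a little care are that the $B$‑side sum must be bounded by $|B|$ (so that the estimate is insensitive to the $k_j$), and that $\Gamma'$ avoids $\gamma=0$ so that the clean identity $r(\gamma)=\gamma-p$ applies on all of $\Gamma'$; the split into large and small primes is purely cosmetic, and one could equally absorb it by using the trivial interval bound whenever the character‑sum bound is weaker.
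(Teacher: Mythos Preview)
Your argument is correct and follows essentially the same route as the paper: bound the $B$-side trivially by $|B|\le\ee$, use $r(\gamma)=\gamma-p\le -\tfrac{21p}{40}$ on $\Gamma'$ to force $\bigl|\sum_{\alpha\in A}(\tfrac{\alpha-\gamma}{p})\bigr|\ge\tfrac{p}{40}-\tfrac12$, and then sum against Theorem~\ref{thm:regialphaabszertekbecsles}. Your write-up is in fact cleaner about signs and about the passage from $\tfrac{20p^{3/2}}{p-20}$ to $21\sqrt{p}$ than the paper's version.
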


\begin{proof}
   $B$ has at most $\ee$ elements. Therefore $\sum_{\beta \in B} (\frac{\beta-\gamma}p) \leq \frac{p+1}{2}$.
   If $\gamma \in \Gamma'$, then clearly 
   $\sum_{\alpha \in A}(\frac{\alpha - \gamma}{p}) > \frac{p}{40} - \frac12$ holds. We sum all equation \eqref{eq:foegyenlet} when $\gamma$ is in $\Gamma'$.
\begin{equation}\label{eq:lem32}
    \sum_{\gamma \in \Gamma'} \sum_{\alpha \in A} \bigg( \frac{\alpha-\gamma}p\bigg) > \frac{p}{40} \frac{\lvert \Gamma' \lvert} 2   \end{equation}
Theorem \ref{thm:regialphaabszertekbecsles} gives an upper bound for the left hand side of equation \eqref{eq:lem32} so it can be estimated from above by $\frac{p^\frac{3}{2}}{2}$. Rearranging these inequalities gives the result 
\end{proof}
It is well-known that the sequence of quadratic residues is quasi-random. A more precise version of this statement is the Pólya-Vinogradov theorem. In our special case, this is expressed in the following Lemma. 
\begin{lemma}\label{lem:33}
    The number of elements $\beta$ of the interval $ [\frac{9p}{20}, \frac{19p}{40}] $ such that $\beta-\gamma$ is a quadratic residue is at least $\frac{p}{80}- \frac{1}{2}\sqrt{p}\log{p}$ for every $\beta \in \F_p$.
\end{lemma}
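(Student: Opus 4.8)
The plan is to apply the Pólya–Vinogradov inequality directly. Fix $\gamma \in \F_p$ and consider the interval $I = [\frac{9p}{20}, \frac{19p}{40}]$, which has length $\frac{p}{80}$ (up to rounding), so $|I| = \frac{p}{80} + O(1)$. The number of $\beta \in I$ with $\legendre{\beta - \gamma}{p} = 1$ equals
$$
\#\{\beta \in I : \beta - \gamma \text{ is a QR}\} = \frac{1}{2}\sum_{\beta \in I}\left( 1 + \legendre{\beta-\gamma}{p}\right) - \frac12 \#\{\beta \in I : \beta \equiv \gamma\},
$$
where the last term corrects for the possibility that $\gamma \in I$ (in which case $\legendre{0}{p}=0$ contributes to neither residues nor non-residues). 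So the count is at least $\frac{|I|}{2} - \frac12 - \frac12\left|\sum_{\beta \in I}\legendre{\beta-\gamma}{p}\right|$. It remains to bound the character sum over the shifted interval.

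By the Pólya–Vinogradov theorem, $\left|\sum_{n=M+1}^{M+N}\legendre{n}{p}\right| \le \sqrt{p}\log p$ for any $M, N$. Writing $\beta - \gamma$ as $n$ ranges over a contiguous block of integers of length $|I|$ (reduced mod $p$, but the character sum is over a genuine interval of residues since $I$ itself is an interval of integers and translation by $-\gamma$ permutes $\F_p$ cyclically — at worst the block wraps around $0$, splitting into two subintervals, each still bounded by $\sqrt p \log p$, so a crude bound of $\sqrt p \log p$ still applies, or one absorbs the factor $2$ into constants). Hence $\left|\sum_{\beta \in I}\legendre{\beta-\gamma}{p}\right| \le \sqrt{p}\log p$, giving the count at least $\frac{|I|}{2} - \frac12 - \frac12\sqrt{p}\log p \ge \frac{p}{160} - \frac12\sqrt p \log p$ after accounting for $|I| \ge \frac{p}{80} - 1$.

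I should note that the constant claimed in the statement is $\frac{p}{80}$, which would require $|I| \approx \frac{p}{40}$; if instead the intended interval length gives half of $|I|$ as the main term, one simply reads off $\frac{|I|}{2} \ge \frac{p}{80}$ from $|I| = \frac{p}{40}$. Either way the only genuine input is Pólya–Vinogradov, and the error term $\frac12\sqrt{p}\log p$ is exactly its contribution. The main (and only) obstacle is bookkeeping: making sure the shifted interval is handled correctly when it wraps around $0$ modulo $p$, and confirming the precise length of $[\frac{9p}{20}, \frac{19p}{40}]$ so the leading constant matches; both are routine. No deeper idea is needed beyond the standard conversion of a residue-counting problem into a character sum plus the Pólya–Vinogradov bound.
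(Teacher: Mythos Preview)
Your approach is correct and matches the paper's, which simply cites the P\'olya--Vinogradov inequality without further detail. One clean-up: the interval length is $\tfrac{19p}{40}-\tfrac{9p}{20}=\tfrac{19p}{40}-\tfrac{18p}{40}=\tfrac{p}{40}$ (so your second computation is the right one, giving main term $\tfrac{p}{80}$), and no wrap-around issue arises because as $\beta$ runs over $I$ the values $\beta-\gamma$ run over a genuine interval of integers, to which P\'olya--Vinogradov applies directly.
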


As a corollary of Lemma \ref{lem:32} and Lemma \ref{lem:33} we obtain the following.
\begin{cor}\label{cor:34}
In the interval of  $ [\frac{9p}{20}, \frac{19p}{40}] $ the number of elements $\gamma$ of $\F_p$ such that $\left( \frac{\beta'-\gamma}p \right) = 1 $  and $r(\gamma) = \gamma$ is at least $\frac{p}{80}- \frac{1}{2}\sqrt{p}\log{p}- 21\sqrt{p}$.
\end{cor}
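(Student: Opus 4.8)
The plan is to treat this as a genuine corollary: intersect the large set of ``good'' residues supplied by Lemma \ref{lem:33} with the complement of the small exceptional set controlled by Lemma \ref{lem:32}, and estimate the intersection from below by a union bound.

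First I would record the structural fact already available: since the interval $[\frac{9p}{20},\frac{19p}{40}]$ does not contain $0$, every $\gamma$ in it satisfies $r(\gamma)=\gamma$ or $r(\gamma)=\gamma-p$. Hence, for $\gamma$ in this interval, the equality $r(\gamma)=\gamma$ fails exactly when $\gamma\in\Gamma_{-p}$, that is, exactly when $\gamma\in\Gamma'$; by Lemma \ref{lem:32} there are at most $21\sqrt{p}$ such exceptional $\gamma$ in the interval.

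Next I would apply Lemma \ref{lem:33} with the distinguished element $\beta'$ in the role of the free parameter and $\gamma$ ranging over the interval: the number of $\gamma\in[\frac{9p}{20},\frac{19p}{40}]$ with $\left(\frac{\beta'-\gamma}{p}\right)=1$ is at least $\frac{p}{80}-\frac{1}{2}\sqrt{p}\log{p}$. Discarding from this collection the at most $21\sqrt{p}$ values of $\gamma$ lying in $\Gamma'$ leaves at least $\frac{p}{80}-\frac{1}{2}\sqrt{p}\log{p}-21\sqrt{p}$ values of $\gamma$ in the interval for which both $\left(\frac{\beta'-\gamma}{p}\right)=1$ and $r(\gamma)=\gamma$ hold, which is exactly the assertion.

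There is essentially no obstacle here; the real content sits in Lemmas \ref{lem:32} and \ref{lem:33}, and this step is a one-line union bound. The only points worth a remark are that we are in the branch of the $\pm$ dichotomy for $r(\gamma)$ that was fixed earlier (so that $r(\gamma)\in\{\gamma,\gamma-p\}$ on the interval), and that the sign ambiguity between $\beta'-\gamma$ and $\gamma-\beta'$ is harmless because the Pólya-Vinogradov estimate underlying Lemma \ref{lem:33} bounds the relevant character sum in absolute value.
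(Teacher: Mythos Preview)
Your proposal is correct and matches the paper's own treatment: the paper presents Corollary~\ref{cor:34} with no separate proof, simply declaring it a consequence of Lemmas~\ref{lem:32} and~\ref{lem:33}, and your union-bound argument is exactly the intended one-line deduction. Your remarks clarifying the role of the fixed $\pm$ branch and the application of Lemma~\ref{lem:33} with $\beta'$ as the free parameter are appropriate, given the slightly garbled variable names in the lemma's statement.
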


If $\left( \frac{\beta'-\gamma}{p}\right)=1$, 
then $\sum_{\beta \in B}\left( \frac{\beta-\gamma}p \right) \geq - \frac{p+1}{2} +\frac p{10} $. Now if $\gamma \in [\frac{9p}{20}, \frac{19p}{40}] $ and $r(\gamma)=\gamma$, then 
by $\sum_{\alpha \in A} \left( \frac{\alpha-\gamma}{p}\right)= \sum_{\beta-\gamma \in B} \left( \frac{\alpha}{p}\right)+r(\gamma)$ we obtain
$\sum_{\alpha \in A}(\frac{\alpha-\gamma}p) > \frac{p}{20}$.

By Corollary \ref{cor:34} this inequality holds 
for $\frac{p}{80}- \frac{1}{2}\sqrt{p}\log{p}- 21\sqrt{p}$ elements of $\gamma$, contradicting Theorem \ref{thm:regialphaabszertekbecsles} if $p$ is large enough.

\section{Examples}
It is easy to find polnyomials of degree $\frac{p+1}{2}$ for certain small primes showing that in this case, the condition that $p$ is large enough is essential. 
\begin{itemize}
    \item $x*(x-1)*(x-2)$ for $p=5$,
    \item $x*(x-1)*(x-2)*(x-3)$ for $p=7$,
    \item $2 x*(x-1)*(x-3)*(x-5)*(x-7)*(x-9)$ for $p=11$, 
\item 
$x*(2-x)*(4-x)*(6-x)*(7-x)*(8-x)*(10-x)$ for $p=13$
\end{itemize}
On the other hand, a straightforward generalization of the examples of polynomials of degree $\dd$ of range sum $p$ would be the following. 

Assume $p=3k+1$. Then the following polynomials are of range sum $p$.
$$ 1+ \alpha x^{\frac{p-1}{3}}+ \alpha^2 x^{\frac{p-1}{3}}, 
$$
where $\alpha^3=1$. It is clear that if we multiply these polynomials with $c_1$ and $c_2$, which are solutions of $3x \equiv 1 \pmod{p}$ and 
$3x \equiv 2 \pmod{p}$, respectively, then the range sum will still be equal to $p$. 

It would be tempting to conjecture that there is no other such polynomial but this is not the case. Clearly, the range sum of $(\frac{p+1}{3} 1+  x^{\frac{p-1}{3}}+  x^{\frac{p-1}{3}} ) + \frac{p+1}{2}(x^\dd+1)$ is equal to $p$ as well, which shows that it will be more difficult to classify 'large degree' polynomials of range sum to prove combinatorial results such as the one of Gács.

\end{document}